\documentclass[11pt]{amsart}

\usepackage[a4paper,margin=1in]{geometry}
\usepackage[T1]{fontenc}
\usepackage[utf8]{inputenc}

\usepackage{amsmath,amssymb,amsfonts,amsthm}
\usepackage{mathtools}
\usepackage{mathrsfs}

\usepackage{enumitem}
\usepackage{aliascnt}
\usepackage[hidelinks]{hyperref}
\usepackage[nameinlink,noabbrev]{cleveref}

\newtheorem{theorem}{Theorem}[section]

\newaliascnt{proposition}{theorem}
\newtheorem{proposition}[proposition]{Proposition}
\aliascntresetthe{proposition}

\newaliascnt{lemma}{theorem}
\newtheorem{lemma}[lemma]{Lemma}
\aliascntresetthe{lemma}

\newaliascnt{corollary}{theorem}

\aliascntresetthe{corollary}

\newaliascnt{conjecture}{theorem}
\newtheorem{conjecture}[conjecture]{Conjecture}
\aliascntresetthe{conjecture}

\theoremstyle{definition}

\newaliascnt{definition}{theorem}

\aliascntresetthe{definition}

\newaliascnt{example}{theorem}

\aliascntresetthe{example}

\newaliascnt{remark}{theorem}
\newtheorem{remark}[remark]{Remark}
\aliascntresetthe{remark}

\newcommand{\R}{\mathbb{R}}

\newcommand{\Pcal}{\mathcal{P}}

\newcommand{\Law}{\operatorname{Law}}
\newcommand{\tr}{\operatorname{tr}}
\newcommand{\W}{W}
\newcommand{\Leb}{\lambda}
\newcommand{\ip}[2]{\left\langle #1,#2\right\rangle}
\newcommand{\qmu}{\mathsf q_{\mu}}

\newcommand{\mtwo}{m_2}
\newcommand{\MC}{\operatorname{MC}}
\newcommand{\Perg}{\operatorname{Per}_{\gamma_n}}

\newcommand{\ind}{\mathbf{1}}
\newcommand{\BVloc}{\operatorname{BV_{loc}}}
\title{Couplings Farthest from the Independent Gaussian}
\author{Stefan Schrott}
\date{\today}
\subjclass[2020]{Primary 49Q22; Secondary 60E15, 62H20}
\keywords{Optimal transport, Wasserstein distance, extremal couplings, prescribed marginals, Gaussian measures, Gaussian isoperimetric inequality, measures of dependence}

\begin{document}

\begin{abstract}
Motivated by Wasserstein measures of dependence, we study the largest possible 2-Wasserstein distance between a joint distribution and the product of its prescribed marginals. For two uniform marginals, Catalano and Lavenant conjectured that the monotone and antimonotone couplings maximize the distance from the independent coupling. We prove the Gaussian analogue for an arbitrary number $n\geq 2$ of one-dimensional standard Gaussian marginals. More generally, for every probability measure $\mu$ on $\mathbb R$ with finite second moment, we characterize the laws on $\R^n$ with all marginals equal to $\mu$ that are farthest from the $n$-dimensional standard Gaussian.

\end{abstract}

\maketitle

\section{Introduction}
Motivated by Wasserstein measures of dependence, we study the problem
of identifying couplings that are farthest, in 2-Wasserstein
distance, from the product of their marginals. For probability measures
$\mu$ and $\nu$ on $\R^k$ with finite second moments, recall that their
squared 2-Wasserstein distance is given by
\[
  \W_2^2(\mu,\nu)
  :=
  \inf_{\kappa\in\Pi(\mu,\nu)}
  \int_{\R^k\times\R^k}|x-y|^2\,d\kappa(x,y),
\]
where $\Pi(\mu,\nu)$ denotes the set of couplings of $\mu$ and $\nu$,
that is, the probability measures on $\R^k\times\R^k$ with first
marginal $\mu$ and second marginal $\nu$. We write $\Pcal_2(\R^k)$ for the set of Borel probability measures on
$\R^k$ with finite second moment.

Let $X_1$ and $X_2$ be real-valued random variables with common law
$\mu$, and let
\[
  \pi=\Law(X_1,X_2)\in\Pi(\mu,\mu)
\]
denote their joint law. A natural way to quantify their dependence is
to consider the Wasserstein distance from $\pi$ to the independent
coupling $\mu\otimes\mu$, see e.g.\ \cite{CaLa25,MoSe22}. Since $\W_2$ is a
distance, the functional
\[
  \pi\longmapsto\W_2(\pi,\mu\otimes\mu),
  \qquad \pi\in\Pi(\mu,\mu),
\]
vanishes precisely at $\pi=\mu\otimes\mu$, and thus characterizes
independence. Intuitively, the monotone and antimonotone couplings
represent the two extreme forms of dependence. It is therefore natural
to ask whether they also maximize the Wasserstein distance from the
independent coupling.

For the uniform probability measure $\Leb$ on $[0,1]$, the elements of
$\Pi(\Leb,\Leb)$ are precisely the bivariate copulas. In this setting,
the monotone and antimonotone couplings are supported on the diagonal
and antidiagonal of the unit square, respectively. This led Catalano
and Lavenant to formulate the following conjecture, which remains open
to the best of our knowledge \cite[Remark~3]{CaLa25}.

\begin{conjecture}[{\cite[Remark~3]{CaLa25}}]
Let $\Leb$ denote the uniform probability measure on $[0,1]$. Then
the diagonal and antidiagonal couplings
\[
  (\mathrm{id},\mathrm{id})_\#\Leb
  \qquad\text{and}\qquad
  (\mathrm{id},1-\mathrm{id})_\#\Leb
\]
maximize the 2-Wasserstein distance from the independent
coupling $\Leb \otimes \Leb$ among all elements of
$\Pi(\Leb,\Leb)$.
\end{conjecture}

\subsection{Gaussian result}
We establish the analogous result when $\mu=\gamma$ is the one-dimensional standard Gaussian measure: the diagonal and antidiagonal Gaussian couplings $\Law(Z,Z)$ and $\Law(Z,-Z)$, where $Z\sim\gamma$, maximize the Wasserstein distance from $\gamma_2=\gamma\otimes\gamma$ over $\Pi(\gamma,\gamma)$.

This is the case $n=2$ of the following $n$-marginal result. Let $\gamma_n$ denote the $n$-dimensional standard Gaussian measure and, for $\mu\in\Pcal_2(\R)$, write
\[
  \Pi_n(\mu)
  :=
  \bigl\{
    \pi\in\Pcal_2(\R^n):
    (\mathrm{pr}_i)_\#\pi=\mu,\quad i=1,\ldots,n
  \bigr\}.
\]

\begin{theorem}\label{thm:gaussian-ot}
For every $n\in \mathbb N$, we have
\[
  \sup_{\pi\in\Pi_n(\gamma)}
  \W_2^2(\gamma_n,\pi)
  =2n-2\sqrt n.
\]
The maximizers are precisely the signed diagonal Gaussian couplings
\[
  \pi_\varepsilon
  =\Law(\varepsilon_1Z,\ldots,\varepsilon_nZ), \qquad Z\sim\gamma,
  \qquad
  \varepsilon\in\{-1,1\}^n.
\]

\end{theorem}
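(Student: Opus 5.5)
The plan is to rewrite the problem as a minimax correlation problem and then prove matching upper and lower bounds. For $X\sim\gamma_n$ and $Y\sim\pi\in\Pi_n(\gamma)$ we have $\mathbb E|X|^2=\mathbb E|Y|^2=n$, so
\[
  \W_2^2(\gamma_n,\pi)=2n-2\,\MC(\pi),\qquad \MC(\pi):=\sup_{\kappa\in\Pi(\gamma_n,\pi)}\int\ip{x}{y}\,d\kappa(x,y).
\]
The theorem therefore says that $\MC(\pi)\ge\sqrt n$ for all $\pi\in\Pi_n(\gamma)$, with equality exactly at the $\pi_\varepsilon$.

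\emph{Step 1 (value at $\pi_\varepsilon$).} Let $Y=\varepsilon Z$. Then $\ip{X}{Y}=\sqrt n\,\xi Z$ with $\xi:=\ip{X}{\varepsilon}/\sqrt n\sim\gamma$. Hence $\MC(\pi_\varepsilon)=\sqrt n\sup\mathbb E[\xi Z]=\sqrt n$, and the supremum is attained by $\xi=Z$. This shows $\sup_\pi\W_2^2(\gamma_n,\pi)\ge 2n-2\sqrt n$.

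\emph{Step 2 (lower bound $\MC(\pi)\ge\sqrt n$).} The point is to exhibit, for an arbitrary $\pi$, a good coupling. I would use a one-dimensional reduction. For any Borel $g:\R^n\to\R$ and any unit vector $u$, couple $\ip{X}{u}\sim\gamma$ comonotonically with $g(Y)$, and complete $X$ by an independent Gaussian in $u^\perp$. This gives
\[
  \MC(\pi)\ \ge\ \mathbb E\bigl[\ip{X}{u}\,u\cdot Y\bigr]\quad\text{with } \ip{X}{u}\ \text{a monotone function of } g(Y).
\]
In other words, $\MC(\pi)\ge\sup_{u,g}\mathbb E[\Phi^{-1}(F_{g}(g(Y)))\,\ip{u}{Y}]$, where $F_g$ is the distribution function of $g(Y)$. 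I would write $\xi=h(Y)$ with $h=\Phi^{-1}\circ F_g\circ g$, so $h(Y)\sim\gamma$, and use the layer-cake formula
\[
  \mathbb E[h(Y)\ip{u}{Y}]=\int_\R \mathbb E\bigl[\ip{u}{Y}\ind_{\{h(Y)>t\}}\bigr]\,dt .
\]
The sets $A_t=\{h>t\}$ have $\pi(A_t)=1-\Phi(t)$. It then suffices to find, for each level $s\in(0,1)$, a set $A$ with $\pi(A)=s$ (nested in $s$) such that $\mathbb E[\ip{u}{Y}\ind_A]\ge\sqrt n\,\varphi(\Phi^{-1}(s))$, where $\varphi$ is the Gaussian density. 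Integrating in $t$ then yields $\sqrt n\int\varphi(t)^2/\varphi(t)\,dt\cdot$(normalisation), i.e.\ $\sqrt n$. The choice I would try first is $u=\mathbf 1/\sqrt n$ and $A=\{\sum_iY_i>c\}$, so that $\mathbb E[\ip{u}{Y}\ind_A]=n^{-1/2}\sum_i\mathbb E[Y_i\ind_A]$. Each term is bounded via the marginal constraint only: among sets of $\pi$-mass $s$, the one-dimensional quantity $\mathbb E[Y_i\ind_A]$ is at most $\varphi(\Phi^{-1}(s))$, with equality iff $A$ is a half-line in $y_i$.

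Here lies the main obstacle. The elementary bound goes the wrong way, since it gives an \emph{upper} bound on each summand. The set $A$ has to be chosen cleverly instead. Because the paper's keywords point to it, I would invoke the Gaussian isoperimetric inequality here, in the following form. Take the Brenier map $T=\nabla\phi$ from $\pi$ to $\gamma_n$, or rather its dual potential. Express the optimal $\MC(\pi)=\int\phi\,d\pi+\int\phi^*\,d\gamma_n$ through a coarea formula over level sets $\{\phi^*>t\}$. The inner integrals $\int_{\{\phi^*>t\}}\ip{x}{y}$ are then controlled below by the Gaussian perimeter $\Perg$ of the level sets, and each coordinate-marginal constraint contributes $\varphi(\Phi^{-1}(\cdot))$. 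Cauchy--Schwarz across the $n$ coordinates produces the factor $\sqrt n$: one uses $\sum_i\nu_i\le\sqrt n\,|\nu|$ for the normal $\nu$. Equality in Cauchy--Schwarz forces $|\nu_i|$ to be constant, i.e.\ $\nu\parallel\varepsilon$. This is exactly where signs $\varepsilon$ and the diagonal structure appear.

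\emph{Step 3 (equality cases).} Tracing equality through Step 2, Gaussian isoperimetry is an equality only for half-spaces. So every level set of the potential must be a half-space with normal $\varepsilon/\sqrt n$ for a fixed $\varepsilon\in\{-1,1\}^n$, and every coordinate constraint must be saturated. Together these force $\varepsilon_iY_i$ to be the same monotone function of a single Gaussian for all $i$, hence $\varepsilon_iY_i=Z$ and $\pi=\pi_\varepsilon$. Some care is needed to pass from equality almost everywhere in $t$ to the global statement. I expect this to be routine once the inequality in Step 2 is in place, using $\BVloc$-type regularity of the indicator functions.
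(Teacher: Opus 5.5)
Your reformulation $\W_2^2(\gamma_n,\pi)=2n-2\MC(\gamma_n,\pi)$ and Step~1 are correct. Step~2, however, carries the whole theorem and is not an argument, and both mechanisms you sketch fail.

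\emph{The one-directional reduction is provably too weak.} By Cauchy--Schwarz, $\mathbb E[h(Y)\ip{u}{Y}]\le(u^\top\Sigma u)^{1/2}\le\lambda_{\max}(\Sigma)^{1/2}$, where $\Sigma$ is the covariance matrix of $\pi$. For the independent coupling $\pi=\gamma_n$ this bound is $1<\sqrt n$ (for $n\ge2$), although $\MC(\gamma_n,\gamma_n)=n$. Since $\lambda_{\max}(\Sigma)\le\tr\Sigma=n$ with equality only for rank-one $\Sigma$, the reduction reaches $\sqrt n$ only at the maximizers $\pi_\varepsilon$ themselves.

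\emph{The fallback applies isoperimetry to the wrong sets.} Let $\varphi$ be the Brenier potential from $\gamma_n$ to $\pi$; this is your $\phi^*$, and note the map from $\pi$ to $\gamma_n$ need not exist, since $\pi$ may be singular. The marginal constraints say nothing about $\gamma_n(\{\varphi>t\})$. What they fix is $\gamma_n(\{\partial_i\varphi>t\})=1-\Phi(t)$. Worse, for $\pi=\pi_\varepsilon$ one has $\varphi(x)=\frac1{2\sqrt n}\ip{\varepsilon}{x}^2$, whose nontrivial superlevel sets $\{|\ip{\varepsilon}{x}|>c\}$ are unions of two opposite half-spaces. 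So an isoperimetric bound on level sets of $\varphi$ could never be sharp at the maximizers. For the same reason, your Step~3 claim that these level sets are half-spaces with normal $\varepsilon/\sqrt n$ fails exactly there. Finally, $\sum_i\nu_i\le\sqrt n|\nu|$ for a normal vector is an upper bound, and you have not attached it to any quantity it would bound from below.

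The missing chain is the following. With $T=\nabla\varphi$ the Brenier map from $\gamma_n$ to $\pi$, Gaussian integration by parts turns the first-order quantity into a second-order one:
\[
  \MC(\gamma_n,\pi)=\int_{\R^n}\ip{x}{\nabla\varphi}\,d\gamma_n=\int_{\R^n}\rho_n\,d(\Delta\varphi).
\]
Write $D^2\varphi=A\,\Delta\varphi$ with $A\succeq0$. The pointwise inequality $\tr A\ge\|A\|_{\mathrm{HS}}\ge n^{-1/2}\sum_i|Ae_i|$ then gives $\int\rho_n\,d(\Delta\varphi)\ge n^{-1/2}\sum_i\int\rho_n\,d|DT_i|$. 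Its first step uses $A\succeq0$, i.e.\ convexity of $\varphi$; Cauchy--Schwarz enters only in the second step.

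Only now do coarea and Gaussian isoperimetry enter, applied to each component $T_i$. Since $\gamma_n(\{T_i>t\})=1-\Phi(t)$,
\[
  \int\rho_n\,d|DT_i|=\int_\R\Perg(\{T_i>t\})\,dt\ge\int_\R\rho_1(t)\,dt=1,
\]
and hence $\MC(\gamma_n,\pi)\ge\sqrt n$.

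In the equality case, it is the superlevel sets of the \emph{components} $T_i$ that are half-spaces, giving $T_i(x)=\ip{a_i}{x}$. The rank-one equality case $A=\frac1n\sigma\sigma^\top$ of the matrix inequality then forces $a_i=\varepsilon_i\varepsilon/\sqrt n$; this is where the signs $\varepsilon$ come from.
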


A key point of \Cref{thm:gaussian-ot} is that the optimization is taken
over the entire class $\Pi_n(\gamma)$, without any assumption that the
joint law is Gaussian. The theorem shows that allowing non-Gaussian
couplings neither increases the maximal value nor produces additional
maximizers. Mordant and Segers previously solved the corresponding
covariance-matrix problem for two possibly vector-valued Gaussian
marginals, which amounts to restricting the optimization to jointly
Gaussian couplings; see
\cite[Proposition~3.9 and Example~3.14]{MoSe22}. They explicitly left
open whether the same conclusion holds when arbitrary couplings are
allowed.

\subsection{Extension to general marginals}

Although our argument relies crucially on the Gaussian reference
measure $\gamma_n$, the prescribed one-dimensional marginals need not
be Gaussian. More precisely, \Cref{thm:general} below determines, for every
$\mu\in\Pcal_2(\R)$, the maximal distance from $\gamma_n$ among all
elements of $\Pi_n(\mu)$ and characterizes all maximizers.

Let $m_\mu$ denote the mean of $\mu$. If $\mu$ is not symmetric about
$m_\mu$, the diagonal coupling is the unique maximizer. If $\mu$ is
symmetric about $m_\mu$, the maximizers are precisely the signed
diagonal couplings
\[
  \Law\bigl(
    m_\mu+\varepsilon_1(Z-m_\mu),\ldots,
    m_\mu+\varepsilon_n(Z-m_\mu)
  \bigr),
  \qquad
  Z\sim\mu,\quad
  \varepsilon\in\{-1,1\}^n.
\]

\subsection{Related literature}
Wasserstein measures of dependence based on the distance from the
product law were studied in
\cite{MoSz20,NiStMu25,OzLyBeVaLeSe19}; Wiesel considered the
complementary approach through conditional distributions
\cite{Wi22}; see also the survey \cite{CaLa25}. The closest precursor
is the covariance-based Gaussian approach of Mordant and Segers, which
De Keyser and Gijbels extended to several groups
\cite{DeKeGi25,MoSe22}.

\subsection{Organization}
In \Cref{sec:formal}, we present the proof strategy in the Gaussian
case under formal smoothness assumptions. In \Cref{sec:bv}, we collect
the required facts about distributional Hessians, functions of bounded
variation, the Gaussian coarea formula, and Gaussian isoperimetry.
Finally, in \Cref{sec:main}, we prove the general result and derive
\Cref{thm:gaussian-ot} as a special case.

\section{Outline of the Gaussian argument}
\label{sec:formal}
In this section, we give a formal argument for \Cref{thm:gaussian-ot} that highlights the main ideas; all analytic and measure-theoretic details are deferred to the subsequent sections. First, we rephrase the problem in terms of estimating the Laplacian of Brenier potentials. Let $\pi\in\Pi_n(\gamma)$ and $T=\nabla\varphi$ be the Brenier map from $\gamma_n$ to $\pi$. Using $|x-y|^2=|x|^2+|y|^2-2\langle x,y\rangle$, the fact that $\pi$ and $\gamma_n$ both have second moment $n$, and Gaussian integration by parts, we find
\begin{equation}\label{eq:formal-cost}
  \W_2^2(\gamma_n,\pi)
  =2n-2\int_{\R^n}\ip{x}{T(x)}\,d\gamma_n(x) = 2n-2\int_{\R^n}\Delta\varphi\,d\gamma_n(x).
\end{equation}
Therefore, \Cref{thm:gaussian-ot} is equivalent to showing, for every convex function $\varphi$ whose partial derivatives have standard Gaussian law, that
\begin{equation}\label{eq:Laplace_ineq}
\int_{\R^n}\Delta\varphi\,d\gamma_n(x) \ge \sqrt n    
\end{equation}
with equality if and only if $\varphi$ is of the form
\begin{equation}
  \varphi(x)
  =\frac{1}{2\sqrt n}\ip{\varepsilon}{x}^2+c,
  \qquad
  \varepsilon\in\{-1,1\}^n,
  \quad c\in\R.
\end{equation}

\subsection{Proof of the inequality}
In order to prove the inequality \eqref{eq:Laplace_ineq}, we need to use information on first derivatives of $\varphi$ provided by the pushforward condition $(\partial_i\varphi)_\#\gamma_n=\gamma$ to derive a lower bound on an integral of second order derivatives. The natural bridge between these two quantities is a Poincaré-type geometric inequality, applied to $T_i = \partial_i\varphi$. In the present setting, the relevant estimate is obtained from the coarea formula and Gaussian isoperimetry for level sets of $T_i$.

\medskip
We write $\Perg(E)$ for the Gaussian perimeter of $E \subset \R^n$ and note that the perimeter of the half-space $H_t:=\{x\in\R^n:x_1>t\}$ is given by $\Perg(H_t)=\rho_1(t)$, where $\rho_1$ is the standard normal density on $\R$. As $(T_i)_\#\gamma_n=\gamma$, we have
\[
    \gamma_n(\{T_i>t\})
    =
    \gamma((t,\infty))
    =
    \gamma_n(H_t).
\]
The Gaussian isoperimetric inequality states that among sets of prescribed Gaussian measure, half-spaces have minimal Gaussian perimeter. Hence,
\begin{equation}\label{eq:level-set-perimeter}
    \Perg(\{T_i>t\})
    \geq
    \Perg(H_t)
    =
    \rho_1(t).
\end{equation}

The coarea formula identifies the total variation of $T_i$ with the
integral of the perimeters of its superlevel sets. Integrating
\eqref{eq:level-set-perimeter} over $t$ therefore yields
\begin{align}
    \int_{\R^n}|\nabla T_i|\,d\gamma_n
    =
    \int_{\R}\Perg(\{T_i>t\})\,dt 
    \geq
    \int_{\R}\rho_1(t)\,dt
    =
    1.
    \label{eq:formal-coarea}
\end{align}

\medskip

As $\varphi$ is convex, its Hessian $\nabla^2\varphi$ is symmetric positive semidefinite. This allows us to bound $\Delta\varphi = \tr(\nabla^2 \varphi)$ in terms of $|\nabla T_i| = |\nabla^2 \varphi e_i|$, where $e_i$ is the $i$-th unit vector in $\R^n$, using the matrix inequality
\begin{equation}\label{eq:formal-matrix}
    \frac{1}{\sqrt n}\sum_{i=1}^n |Ae_i|
    \leq
    \|A\|_{\mathrm{HS}}
    \leq
    \tr(A).
\end{equation}
Here, $\|A\|_{\mathrm{HS}}$ is the Hilbert--Schmidt norm; see \Cref{lem:matrix} for its proof. Combining the pointwise application of \eqref{eq:formal-matrix} with $A=\nabla^2\varphi$ and \eqref{eq:formal-coarea} yields
\[
\int_{\R^n}\Delta\varphi\,d\gamma_n(x) \ge \frac{1}{\sqrt n}\sum_{i=1}^n \int_{\R^n}|\nabla T_i|\,d\gamma_n
    \ge \sqrt{n}.
\]

\subsection{Characterization of equality cases}

Suppose that a convex function $\varphi$, each of whose partial derivatives has standard Gaussian law, satisfies
\begin{equation}\label{eq:laplacian-equality}
    \int_{\R^n}\Delta\varphi\,d\gamma_n
    =
    \sqrt n.
\end{equation}
Then equality must hold at every step of the preceding chain of
integrated inequalities. We first show equality in the application of the Gaussian isoperimetric inequality forces $\varphi$ to be a quadratic function. To that end, fix $i\in\{1,\ldots,n\}$ and note that we have
\[
    \int_{\R^n}|\nabla T_i|\,d\gamma_n=1.
\]
Equality in the integrated estimate implies
that equality in \eqref{eq:level-set-perimeter} holds for  every
$t\in\R$. Hence, for every $t$, the superlevel set
$\{T_i>t\}$ is a
half-space. Since $(T_i)_\#\gamma_n=\gamma$, this half-space has the
same Gaussian measure as $H_t$. 
Consequently, there is a unit vector $a_i(t)\in\R^n$ such that
\[
    \{T_i>t\}
    =
    \{x\in\R^n:\langle a_i(t),x\rangle>t\}.
\]
The superlevel sets of $T_i$ are nested. Since nested nontrivial
half-spaces must have parallel boundaries and the same orientation,
the vectors $a_i(t)$ coincide. Denoting their common value by $a_i$, we obtain
$
    T_i(x)=\langle a_i,x\rangle.
$

Writing $A$ for the matrix whose $i$-th row is $a_i^\top$, we have 
$
    T(x)=Ax.
$
Since $T=\nabla\varphi$ and $\varphi$ is convex, $A$ is symmetric and
positive semidefinite,
\[
    \varphi(x)
    =
    \frac12\langle Ax,x\rangle+c
\]
for some $c\in\R$.

It remains to determine $A$. Equality in
\eqref{eq:formal-matrix} first gives
$
    \tr(A)=\|A\|_{\mathrm{HS}}.
$
Since the eigenvalues of $A$ are nonnegative, equality between their
$\ell^1$- and $\ell^2$-norms implies that at most one eigenvalue is non-zero. Hence, there is $v\in\R^n$ such that 
$
    A=vv^\top. 
$
Then equality in $\frac{1}{\sqrt n}\sum_{i=1}^n |Ae_i|
    \leq
    \|A\|_{\mathrm{HS}}$
further yields that 
$
    A= t \varepsilon\varepsilon^\top
$
for some $t>0$. The condition $(T_1)_\#\gamma_n=\gamma$ yields that $t =\frac1{\sqrt n}$. 
Consequently,
\[
    \varphi(x)
    =
    \frac1{2\sqrt n}\langle\varepsilon,x\rangle^2+c.
\]

\section{Preliminaries from geometric measure theory}
\label{sec:bv}
The aim of this section is to provide the measure-theoretic background needed to make the proof precise without any regularity assumptions on the convex potentials $\varphi$.

We write $\mathcal L^n$ for the Lebesgue measure on $\R^n$ and $\rho_n$ for the density of $\gamma_n$ w.r.t.\ $\mathcal L^n$. 

\subsection{Distributional derivatives of convex functions}

Let $\varphi:\mathbb R^n\to\mathbb R$ be convex. We write $D\varphi$ for its distributional gradient, $D^2\varphi$ for its distributional Hessian and denote the respective components by $D_i\varphi$ and $D_{ij}\varphi$ for $i,j \in \{1,\dots,n\}$. Crucially, $\varphi$ is
locally Lipschitz and hence the classical gradient $\nabla\varphi$ exists $\mathcal L^n$-almost
everywhere and $ D\varphi=\nabla\varphi\,\mathcal L^n$. 
%  see, for instance, \cite[Section 6.3]{EvansGariepy}. 

The distributional Hessian $D^2\varphi$  is a symmetric positive semidefinite matrix-valued locally finite Radon
measure. Here, positive semidefiniteness means that
$
    \xi^\top D^2\varphi\,\xi
$
is a nonnegative Radon measure for every $\xi\in\R^n$. In
particular, the distributional Laplacian
\[
    \Delta\varphi
    :=
    \operatorname{tr}(D^2\varphi)
    =
    \sum_{i=1}^n D_{ii}\varphi
\]
is a nonnegative locally finite Radon measure.

%As a symmetric positive semidefinite matrix is zero if its trace is zero, $D^2\varphi$ is absolutely continuous w.r.t.\ $\Delta\varphi$. Therefore Radon-Nikodym yields that there is a Borel function $A : \R^n \to \R^{n \times n}$ such that $A(x)$ is symmetric positive semidefinite with $\tr(A(x))=1$ for every $x \in \R^n$ and 
%\begin{align*}
%    D^2 \varphi = A \Delta \varphi. 
%\end{align*}

\begin{remark}
Note that $D^2\varphi$ always refers to the
distributional Hessian, which should not be confused
with the Alexandrov Hessian $\nabla_A^2\varphi$. The latter  exists
$\mathcal L^n$-almost everywhere and is the density of the absolutely
continuous part of $D^2\varphi$. In particular, $\Delta\varphi$ refers to the trace of the distributional Hessian and not the Alexandrov Hessian.    
\end{remark}

\begin{lemma}[Gaussian integration by parts for convex functions]\label{lem:GaussPartInt}
Let $\varphi \in L^1(\gamma_n)$ be convex and assume that
$\nabla\varphi\in L^2(\gamma_n;\mathbb R^n)$. Then
\[
    \int_{\mathbb R^n} \rho_n(x)\,d(\Delta\varphi)(x)
    =
    \int_{\mathbb R^n}
        \langle x,\nabla\varphi(x)\rangle\,d\gamma_n(x)
\]
and in particular, both sides of the equality are finite. 
\end{lemma}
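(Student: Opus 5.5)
Plan: approximate $\rho_n$ by compactly supported test functions and pass to the limit using monotone convergence on the left and dominated convergence on the right. Since $\nabla\rho_n = -x\rho_n$, the distributional definition of the Laplacian gives, for every $\psi\in C_c^\infty(\R^n)$,
\[
  \int \psi\,d(\Delta\varphi) = -\int \langle \nabla\psi,\nabla\varphi\rangle\,d\mathcal L^n .
\]
Here we use $D\varphi=\nabla\varphi\,\mathcal L^n$ and integrate by parts once, which is allowed because $\varphi$ is locally Lipschitz.

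Fix a cutoff $\chi\in C_c^\infty(\R^n)$ with $0\le\chi\le1$, $\chi=1$ on $B_1$, $\chi=0$ outside $B_2$, and radially nonincreasing. Set $\chi_R(x)=\chi(x/R)$ and $\psi_R=\chi_R\rho_n$. Then
\[
  \nabla\psi_R=-\chi_R\,x\rho_n+\rho_n\nabla\chi_R ,
\]
and therefore
\[
  \int\chi_R\rho_n\,d(\Delta\varphi)=\int\chi_R\langle x,\nabla\varphi\rangle\,d\gamma_n-\int\langle\nabla\chi_R,\nabla\varphi\rangle\,d\gamma_n .
\]

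Now let $R\to\infty$ term by term.

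\emph{Left side.} Since $\Delta\varphi\ge0$ and $\chi_R\uparrow1$ pointwise (because $\chi$ is radially nonincreasing), monotone convergence gives $\int\rho_n\,d(\Delta\varphi)\in[0,\infty]$ as the limit.

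\emph{First term on the right.} Cauchy--Schwarz gives $|\langle x,\nabla\varphi\rangle|\le |x||\nabla\varphi|$, which lies in $L^1(\gamma_n)$ because $|x|\in L^2(\gamma_n)$ and $\nabla\varphi\in L^2(\gamma_n)$. Dominated convergence then yields convergence to $\int\langle x,\nabla\varphi\rangle\,d\gamma_n$.

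\emph{Error term.} We have $|\nabla\chi_R|\le \|\nabla\chi\|_\infty/R$, so the error is bounded by $R^{-1}\|\nabla\chi\|_\infty\|\nabla\varphi\|_{L^1(\gamma_n)}\to0$.

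Combining the three limits shows the left side equals the finite right side, which gives both the identity and the finiteness claim.

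The main obstacle is only the justification of the first integration by parts, i.e.\ that the distributional Laplacian tested against $\psi_R$ equals $-\int\langle\nabla\psi_R,\nabla\varphi\rangle$. This is simply the definition $\int\psi\,d(D_{ii}\varphi)=-\int\partial_i\psi\,d(D_i\varphi)$ combined with $D_i\varphi=\partial_i\varphi\,\mathcal L^n$, valid since $\psi_R\in C_c^\infty$.

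The hypothesis $\varphi\in L^1(\gamma_n)$ is not actually needed in this route, because we never integrate $\varphi$ itself. It is enough that $\nabla\varphi\in L^2(\gamma_n)$, which in particular gives $\nabla\varphi\in L^1(\gamma_n)$.
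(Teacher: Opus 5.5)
Your proposal is correct and is essentially the paper's proof. Both test the distributional Hessian against $\chi_R\rho_n$, apply dominated convergence to the $\langle x,\nabla\varphi\rangle$ term, and bound the cutoff-gradient term by $O(R^{-1})$; the only cosmetic differences are that you work with $\Delta\varphi$ directly instead of each $D_{ii}\varphi$, and that your radially nonincreasing cutoff lets you use monotone convergence on the left in place of the paper's Fatou-then-dominated-convergence step (and you are right that $\varphi\in L^1(\gamma_n)$ is never used).
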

\begin{proof}
Choose $\eta\in C_c^\infty(\R^n)$ such that $0\leq\eta\leq1$,
$\eta=1$ on $B_1$, and $\eta=0$ outside $B_2$, and set
$\eta_R(x):=\eta(x/R)$. By the definition of the distributional
derivative,
\begin{align*}
  \int_{\R^n}\eta_R\rho_n\,d(D_{ii}\varphi)
  =-\int_{\R^n}\partial_i(\eta_R\rho_n)\,
    \partial_i\varphi\,dx
  =\int_{\R^n}\eta_R x_i\partial_i\varphi\,d\gamma_n
    -\int_{\R^n}\partial_i\eta_R\,\partial_i\varphi\,d\gamma_n.
\end{align*}
The first term converges by dominated convergence, since
$x_i\partial_i\varphi\in L^1(\gamma_n)$ by Cauchy--Schwarz, while the
absolute value of the second is bounded by
$C R^{-1}\|\partial_i\varphi\|_{L^2(\gamma_n)}$ and therefore tends to
zero. Since $D_{ii}\varphi$ is a positive measure, Fatou's lemma first
shows that $\int\rho_n\,d(D_{ii}\varphi)<\infty$. Dominated convergence
with respect to this finite weighted measure then gives
\[
  \int_{\R^n}\rho_n\,d(D_{ii}\varphi)
  =\int_{\R^n}x_i\partial_i\varphi\,d\gamma_n.
\]
Summing over $i$ proves the claim.
\end{proof}

\subsection{Gaussian coarea and isoperimetry}

We briefly recall the notions from $BV$ theory that will be used below;
see \cite[Chapter~3]{AmFuPa00} for background. A function $u\in L^1_{\mathrm{loc}}(\R^n)$ belongs to
$\BVloc(\R^n)$ if its distributional derivative is an
$\R^n$-valued locally finite Radon measure. If $M$ is an
$\R^k$-valued Radon measure on $\R^n$, we write $|M|$ for its total variation measure.

For a Borel set $E\subseteq\R^n$, we write $\ind_E$ for its indicator
function. The set $E$ has locally finite perimeter if
$\ind_E\in BV_{\mathrm{loc}}(\R^n)$, in which case $|D\ind_E|$ is its
perimeter measure. Its Gaussian perimeter is defined by
\[
  \Perg(E)
  :=
  \int_{\R^n}\rho_n\,d|D\ind_E|.
\]
We set $\Perg(E)=+\infty$ if $E$ does not have locally finite
perimeter.

\begin{remark}
We use this measure-theoretic definition because it requires no
regularity of the level sets appearing below. For sets with sufficiently
regular boundary, it agrees with the familiar surface integral:
\[
  |D\ind_E|
  =
  \mathcal H^{n-1}|_{\partial E},
  \qquad
  \Perg(E)
  =
  \int_{\partial E}\rho_n\,d\mathcal H^{n-1}.
\]
If $E$ is bounded with $C^2$ boundary, it also agrees with its Gaussian
outer Minkowski content:
\[
  \Perg(E)
  =
  \lim_{r\downarrow0}
  \frac{\gamma_n(E_r)-\gamma_n(E)}{r},
  \qquad
  E_r:=\{x\in\R^n:\operatorname{dist}(x,E)<r\}.
\]
\end{remark}

We record the coarea formula in the weighted form used below.

\begin{proposition}[Gaussian coarea formula]
\label{prop:gaussian-coarea}
For every $u\in BV_{\mathrm{loc}}(\R^n)$,
\[
  \int_{\R^n}\rho_n\,d|Du|
  =
  \int_\R\Perg(\{u>t\})\,dt,
\]
with equality in $[0,+\infty]$.
\end{proposition}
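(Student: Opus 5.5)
We will derive the Gaussian coarea formula from the classical (unweighted) coarea formula for $BV$ functions, \cite[Theorem~3.40]{AmFuPa00}, by integrating the weight $\rho_n$ against it. Let $u\in\BVloc(\R^n)$. The classical statement has two parts. First, for $\mathcal L^1$-almost every $t$, the superlevel set $E_t:=\{u>t\}$ has locally finite perimeter. Second, for every Borel set $B\subseteq\R^n$,
\[
  |Du|(B)=\int_\R |D\ind_{E_t}|(B)\,dt .
\]
In particular, $t\mapsto|D\ind_{E_t}|(B)$ is Lebesgue measurable. On the exceptional null set of $t$, the definition $\Perg(E_t)=+\infty$ may be in force. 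This does not affect the $dt$-integral, since a null set carries no mass even for the value $+\infty$. So the right-hand side is well-defined in $[0,+\infty]$, once measurability of $t\mapsto\Perg(E_t)$ is established below.

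The plan is to pass from indicator functions of Borel sets to the nonnegative Borel weight $\rho_n$. This is the standard measure-theoretic extension.

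\textbf{Simple functions.} For a nonnegative simple Borel function $g=\sum_k c_k\ind_{B_k}$, linearity and the Borel-set identity give
\[
  \int g\,d|Du|=\int_\R\int g\,d|D\ind_{E_t}|\,dt .
\]
The inner integral is a measurable function of $t$, being a finite combination of measurable functions.

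\textbf{General weight.} Choose simple functions $0\le g_k\uparrow\rho_n$ pointwise. By monotone convergence for the measure $|Du|$, the left-hand side converges to $\int\rho_n\,d|Du|$. For each fixed $t$ with $E_t$ of locally finite perimeter, monotone convergence for $|D\ind_{E_t}|$ gives
\[
  \int g_k\,d|D\ind_{E_t}|\uparrow\Perg(E_t).
\]
The limit $\Perg(E_t)$ is therefore measurable in $t$, as a pointwise limit of measurable functions (up to the null set). A final application of monotone convergence in $t$ yields the identity
\[
  \int_{\R^n}\rho_n\,d|Du|=\int_\R\Perg(\{u>t\})\,dt
\]
in $[0,+\infty]$. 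Since all quantities are nonnegative, no finiteness assumption is needed anywhere. Both sides may be infinite, since only local finiteness of $|Du|$ is assumed.

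The main point requiring care is the precise form of the classical coarea formula. In some references it is stated for $u\in BV(\Omega)$ on bounded open sets, or only for open sets $B$. If so, we would first apply it on balls $B_R$, where $u\in BV(B_R)$, and obtain the Borel-set identity for $B\subseteq B_R$ by the uniqueness of measures agreeing on open sets: both sides are finite Borel measures on $B_R$, the right-hand side by monotone convergence. Letting $R\to\infty$ by monotone convergence then gives the identity for all Borel $B$. Likewise, the local-finiteness of perimeter for a.e.\ $t$ on $\R^n$ follows from the statements on each $B_R$ by taking a countable union of null sets.
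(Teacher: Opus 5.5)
Your proposal is correct and takes essentially the same approach as the paper, which simply invokes the measure-valued coarea formula (Ambrosio--Fusco--Pallara, Theorem~3.40 and (3.63)) integrated against the nonnegative Borel weight $\rho_n$. You additionally spell out three points the paper leaves to the citation: the simple-function and monotone-convergence passage from Borel sets to the weight, the harmless null set of levels where $\Perg(\{u>t\})=+\infty$ by convention, and the localization from $BV(B_R)$ to $\BVloc(\R^n)$.
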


This is the standard measure-valued coarea formula
\cite[Theorem~3.40 and (3.63)]{AmFuPa00}, integrated against the
nonnegative weight $\rho_n$.

We next recall the geometric inequality that will be applied to the
superlevel sets in \Cref{prop:gaussian-coarea}. For a unit vector
$a\in\R^n$ and $r\in[-\infty,+\infty]$, let
\begin{equation}\label{eq:gaussian-halfspaces}
  H_{a,r}:=\{x\in\R^n:\langle a,x\rangle>r\}.
\end{equation}
For convenience, this notation includes the entire space $H_{a,-\infty}=\R^n$ and the empty set $H_{a,+\infty}=\emptyset$. We write $\Phi$ for the cdf of the standard Gaussian on $\R$, $\rho_1$ for its density, $\Phi^{-1}$ for the quantile function and use the conventions
\[
\Phi(-\infty)=0, 
\qquad
\Phi(+\infty)=1,
\qquad
  \Phi^{-1}(0)=-\infty,
  \qquad
  \Phi^{-1}(1)=+\infty,
  \qquad
  \rho_1(\pm\infty)=0.
\]
By rotational invariance of $\gamma_n$,
\[
  \gamma_n(H_{a,r})=1-\Phi(r),
  \qquad
  \Perg(H_{a,r})=\rho_1(r).
\]
Consequently, every half-space of Gaussian measure $s\in[0,1]$ has
Gaussian perimeter
\[
  I_\gamma(s):=\rho_1(\Phi^{-1}(s)),
\]
where we use the symmetry of $\rho_1$. The Gaussian isoperimetric
inequality states that half-spaces minimize Gaussian perimeter among
all sets of prescribed Gaussian measure. The inequality was proved independently by Sudakov--Tsirel'son \cite{SuTs78} and Borell \cite{Bo75}. For the equality cases in the form used
here, see Carlen and Kerce \cite[Theorem~2]{CaKe01}.

\begin{theorem}[Gaussian isoperimetric inequality]
\label{thm:gaussian-isoperimetry}
Let $E\subseteq\R^n$ be a Borel set. Then
\[
  \Perg(E)
  \geq
  I_\gamma(\gamma_n(E)).
\]
Equality holds if and only if $E$ agrees $\gamma_n$-almost everywhere
with a half-space.
\end{theorem}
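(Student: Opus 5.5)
The statement to address is the Gaussian isoperimetric inequality (\Cref{thm:gaussian-isoperimetry}), cited from Sudakov--Tsirel'son, Borell and Carlen--Kerce. I will not reprove it from scratch; the plan is to reduce the measure-theoretic formulation used here to the classical statements in those references.

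\textbf{Inequality.} If $E$ lacks locally finite perimeter, then $\Perg(E)=+\infty$ and nothing is to prove. Likewise, if $\gamma_n(E)\in\{0,1\}$, the right-hand side is $I_\gamma(0)=I_\gamma(1)=0$, so the inequality is trivial. Otherwise, I would use the functional (Bobkov) form of Gaussian isoperimetry: for locally Lipschitz $f:\R^n\to[0,1]$,
\[
  I_\gamma\Bigl(\int f\,d\gamma_n\Bigr)\le\int\sqrt{I_\gamma(f)^2+|\nabla f|^2}\,d\gamma_n .
\]
This form follows from the classical Minkowski-content version of Borell and Sudakov--Tsirel'son, for instance via Bobkov's two-point inequality and the central limit theorem. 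I would apply it to mollified indicators $f_k=\ind_E*\eta_{1/k}$. We have $f_k\to\ind_E$ in $L^1(\gamma_n)$, so the left-hand side converges to $I_\gamma(\gamma_n(E))$ by continuity of $I_\gamma$.

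On the right-hand side, $\int I_\gamma(f_k)\,d\gamma_n\to\int I_\gamma(\ind_E)\,d\gamma_n=0$ by dominated convergence. Using $\sqrt{a^2+b^2}\le a+b$, it remains to show $\limsup_k\int\rho_n|\nabla f_k|\,dx\le\Perg(E)$. This holds because $|\nabla f_k|\le|D\ind_E|*\eta_{1/k}$, and $\rho_n*\eta_{1/k}\le(1+o(1))\rho_n$ locally uniformly. The tails need a truncation $\eta_R$ as in \Cref{lem:GaussPartInt}, and that truncation is where the only care is needed.

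\textbf{Equality.} Carlen--Kerce \cite[Theorem~2]{CaKe01} prove that equality in the Gaussian isoperimetric inequality, with perimeter defined via the total variation of $\ind_E$ weighted by $\rho_n$ (exactly our $\Perg$), holds only for half-spaces up to null sets. So I would verify that their definition coincides with ours. For the converse direction, a set $\gamma_n$-a.e.\ equal to $H_{a,r}$ has the same distributional derivative $D\ind_E$, hence the same perimeter $\rho_1(r)=I_\gamma(\gamma_n(E))$. The degenerate cases $r=\pm\infty$ correspond to null or conull sets and are consistent with our conventions.

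\textbf{Main obstacle.} The main obstacle is the approximation step in the inequality when $E$ has infinite Euclidean perimeter but finite Gaussian perimeter, where global mollification estimates fail. I would handle this by localizing on balls $B_R$ and using lower semicontinuity of $\int\rho_n\,d|Du|$ under $L^1_{\mathrm{loc}}$ convergence.
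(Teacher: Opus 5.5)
The paper gives no proof of this theorem. It quotes the inequality from Sudakov--Tsirel'son and Borell, and the equality cases, in the BV formulation of $\Perg$ used here, from Carlen--Kerce \cite[Theorem~2]{CaKe01}. Your equality part is the same citation. The real difference is that you add an explicit bridge from Bobkov's functional inequality for smooth functions to the measure-theoretic $\Perg$. That bridge is sound in outline and makes clear that no regularity of $E$ is needed. It is optional, though, once you cite Carlen--Kerce, since (as the paper notes) their result is already in the form used here. Also, Bobkov's two-point/CLT argument is an independent proof of the functional inequality, not a derivation from the Minkowski-content version; simply cite Bobkov.

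One step of your bridge, as written, points the wrong way. Lower semicontinuity of $u\mapsto\int\rho_n\,d|Du|$ under $L^1_{\mathrm{loc}}$ convergence gives $\Perg(E)\le\liminf_k\int\rho_n|\nabla f_k|\,dx$, but you need an \emph{upper} bound on the approximants.

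The obstacle itself is real. After Fubini, your bound $|\nabla f_k|\le|D\ind_E|\ast\theta_\delta$ (with $\theta_\delta$ the mollifier) leads to $\int(\rho_n\ast\theta_\delta)\,d|D\ind_E|$. The ratio $(\rho_n\ast\theta_\delta)/\rho_n$ grows like $e^{c\delta|x|}$, and one can have $\Perg(E)<\infty$ while this integral is infinite for every $\delta>0$ (e.g.\ many tiny balls in far shells).

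The fix is to truncate \emph{before} mollifying. Apply Bobkov's inequality to $g_{R,\delta}:=(\eta_R\ind_E)\ast\theta_\delta$, with $\eta_R$ the cutoff from \Cref{lem:GaussPartInt}. By the product rule, $|D(\eta_R\ind_E)|\le\eta_R|D\ind_E|+\ind_E|\nabla\eta_R|\,\mathcal L^n$ is a finite measure supported in $\overline{B_{2R}}$. On that set $\rho_n\ast\theta_\delta\le e^{2R\delta}\rho_n$, so
\[
  \int\rho_n|\nabla g_{R,\delta}|\,dx
  \le
  e^{2R\delta}\Bigl(\Perg(E)+\frac{\|\nabla\eta\|_\infty}{R}\Bigr).
\]
The price is that $\eta_R\ind_E$ is no longer an indicator. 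Hence, as $\delta\to0$, $\int I_\gamma(g_{R,\delta})\,d\gamma_n\to\int I_\gamma(\eta_R\ind_E)\,d\gamma_n\le\gamma_n(\R^n\setminus B_R)$ rather than to $0$, which is harmless. Letting $\delta\to0$ and then $R\to\infty$ gives $I_\gamma(\gamma_n(E))\le\Perg(E)$.
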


\section{General result and proof}\label{sec:main}

\begin{theorem}\label{thm:general}
Let $n\geq2$ and $\mu\in\Pcal_2(\R)$, and denote the mean of $\mu$ by
$m_\mu$. Then
\begin{equation}\label{eq:general-value}
  \sup_{\pi\in\Pi_n(\mu)}
  \W_2^2(\gamma_n,\pi)
  =
  (n-1)
  +
  \W_2^2\bigl(\gamma,(\sqrt n\,\mathrm{id})_\#\mu\bigr).
\end{equation}
The maximizers are classified as follows.
\begin{enumerate}[label=\textup{(\roman*)}]
  \item If $\mu$ is not symmetric about $m_\mu$, the unique maximizer is
  the diagonal coupling
  \[
    \pi_{\Delta,\mu}
    :=
    \Law(Z,\ldots,Z),
    \qquad Z\sim\mu.
  \]

  \item If $\mu$ is symmetric about $m_\mu$, the maximizers are precisely
  the signed diagonal couplings
  \begin{equation}\label{eq:signed-mu-diagonals}
    \pi_{\varepsilon,\mu}
    :=
    \Law\bigl(
      m_\mu+\varepsilon_1(Z-m_\mu),\ldots,
      m_\mu+\varepsilon_n(Z-m_\mu)
    \bigr),
    \qquad
    \varepsilon\in\{-1,1\}^n,
  \end{equation}
  where $Z\sim\mu$.
\end{enumerate}
\end{theorem}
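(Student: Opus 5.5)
We reduce to the potential inequality of \Cref{sec:formal}. Let $\pi\in\Pi_n(\mu)$ and let $T=\nabla\varphi$ be the Brenier map from $\gamma_n$ to $\pi$. Since $\gamma_n$ has second moment $n$ and $\pi$ has second moment $n\,\mtwo(\mu)$, where $\mtwo(\mu)$ is the second moment of $\mu$, we get
\[
\W_2^2(\gamma_n,\pi)=n+n\,\mtwo(\mu)-2\int\langle x,T\rangle\,d\gamma_n .
\]
After normalizing $\varphi$ so that $\varphi\in L^1(\gamma_n)$, \Cref{lem:GaussPartInt} turns the last integral into $\int\rho_n\,d(\Delta\varphi)$. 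In one dimension, the monotone rearrangement gives
\[
\W_2^2(\gamma,(\sqrt n\,\mathrm{id})_\#\mu)=1+n\,\mtwo(\mu)-2\sqrt n\,C_\mu,
\qquad C_\mu:=\int_\R x\,\qmu(\Phi(x))\,d\gamma(x),
\]
where $\qmu$ is the quantile function of $\mu$. So \eqref{eq:general-value} is equivalent to the lower bound $\int\rho_n\,d(\Delta\varphi)\ge\sqrt n\,C_\mu$ for every convex $\varphi$ with $(\partial_i\varphi)_\#\gamma_n=\mu$, together with attainment.

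\emph{Attainment.} Take $\varphi(x)=\sqrt n\,G(\langle\mathbf 1,x\rangle/\sqrt n)$ with $G'=\qmu\circ\Phi$; then $G$ is convex. Each $\partial_i\varphi=\qmu(\Phi(W))$ with $W=\langle\mathbf 1,x\rangle/\sqrt n\sim\gamma$, so $(\nabla\varphi)_\#\gamma_n=\pi_{\Delta,\mu}$. A one-line computation gives $\int\langle x,\nabla\varphi\rangle\,d\gamma_n=\sqrt n\,C_\mu$, so the diagonal coupling attains the value. The same construction with $W=\langle\varepsilon,x\rangle/\sqrt n$ and $\partial_i\varphi=m_\mu+\varepsilon_i(\qmu(\Phi(W))-m_\mu)$ works in the symmetric case. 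Symmetry is exactly what makes $m_\mu+(m_\mu-\qmu(\Phi(W)))=\qmu(\Phi(-W))$ have law $\mu$.

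\emph{Lower bound without regularity.} The matrix measure $D^2\varphi$ is absolutely continuous with respect to $\Delta\varphi$, since a positive semidefinite matrix with zero trace vanishes. Writing $D^2\varphi=A\,\Delta\varphi$ with $A(x)$ symmetric positive semidefinite of trace one, we have $|DT_i|=|Ae_i|\,\Delta\varphi$. The pointwise inequality \eqref{eq:formal-matrix} then gives
\[
\int\rho_n\,d(\Delta\varphi)\ \ge\ \frac1{\sqrt n}\sum_i\int\rho_n\,d|DT_i| .
\]
By \Cref{prop:gaussian-coarea} and \Cref{thm:gaussian-isoperimetry},
\[
\int\rho_n\,d|DT_i|\ \ge\ \int_\R I_\gamma\bigl(1-F_\mu(t)\bigr)\,dt,
\]
where $F_\mu$ is the cdf of $\mu$. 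Substituting $t=\qmu(s)$, or equivalently integrating by parts, identifies the right-hand side with $\int\rho_1(\Phi^{-1}(F_\mu(t)))\,dt=C_\mu$. Summing over $i$ gives the bound.

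\emph{Equality cases; this is the main obstacle.} If equality holds, then for a.e.\ $t$ the set $\{T_i>t\}$ is $\gamma_n$-a.e.\ a half-space $H_{a_i(t),r(t)}$ of measure $1-F_\mu(t)$. Nestedness forces $a_i(t)$ to be constant on the nontrivial range. Hence $T_i=\qmu(\Phi(\langle a_i,x\rangle))$ a.e.\ for a unit vector $a_i$, so $DT_i=a_i\otimes\nu_i$ with $\nu_i$ a nonnegative measure. Next, equality in \eqref{eq:formal-matrix} holds $\Delta\varphi$-a.e.; wherever it does, $A=\tfrac1n\sigma\sigma^\top$ for a sign vector $\sigma$. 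Comparing the $i$-th row $a_i\otimes\nu_i$ with $\tfrac1n\sigma_i\sigma\,\Delta\varphi$ forces $a_i=\sigma_i\sigma/\sqrt n$. Here $\sigma$ is locally constant, hence constant, because the $a_i$ are constant. Setting $\varepsilon:=\sigma$ and $W:=\langle\varepsilon,x\rangle/\sqrt n\sim\gamma$, this gives $T_i=\qmu(\Phi(\varepsilon_iW))$.

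If all $\varepsilon_i$ have the same sign, the coupling is $\pi_{\Delta,\mu}$. Otherwise $\pi$ couples $\qmu(\Phi(W))$ with $\qmu(\Phi(-W))$ in some coordinates. Whether this is admissible among the maximizers, and equal to a signed diagonal \eqref{eq:signed-mu-diagonals}, is where symmetry enters. If $\mu$ is symmetric, $\qmu(\Phi(-W))=2m_\mu-\qmu(\Phi(W))$ and we obtain exactly $\pi_{\varepsilon,\mu}$. If $\mu$ is not symmetric, I still need to exclude mixed signs. I would check this carefully, first by testing whether the symmetric-Hessian structure of $DT=D^2\varphi$ together with the rank-one identification rules them out, and then by recomputing the exact attained value for mixed signs.

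The delicate points I expect are: making the a.e.\ identification of superlevel sets rigorous for $t$ in the support range when $\mu$ has atoms or gaps; justifying the polar decomposition $D^2\varphi=A\,\Delta\varphi$ and its compatibility with $|DT_i|$; and the asymmetric-sign exclusion.
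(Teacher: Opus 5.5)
Up to the last step your argument is the paper's: Gaussian integration by parts, the decomposition $D^2\varphi=A\,\Delta\varphi$, \Cref{lem:matrix} applied pointwise, and coarea plus Gaussian isoperimetry for each $T_i$ (\Cref{prop:ineq_component}). In the equality case you also reach the paper's conclusion $T_i=g(\varepsilon_iW)$, with $W=\langle\varepsilon,x\rangle/\sqrt n$ and $g:=F_\mu^{-1}\circ\Phi$ (your $\qmu\circ\Phi$, the paper's $\qmu$). The genuine gap is the step you leave open: excluding mixed sign vectors $\varepsilon$ when $\mu$ is not symmetric about $m_\mu$. Without it, the uniqueness claim (i) is unproved.

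Your second suggested check, recomputing the attained value, cannot settle this. For every $\varepsilon$, the map $T_\varepsilon=(g(\varepsilon_iW))_{i}$ pushes $\gamma_n$ into $\Pi_n(\mu)$, because $g(-W)\sim\mu$ whether or not $\mu$ is symmetric. Moreover, $\int\langle x,T_\varepsilon\rangle\,d\gamma_n=\sqrt n\,C_\mu$ for every sign pattern. A mixed-sign coupling fails to be a maximizer only because $T_\varepsilon$ is then not a gradient of a convex function, hence not optimal. So $\MC(\gamma_n,(T_\varepsilon)_\#\gamma_n)$ strictly exceeds $\int\langle x,T_\varepsilon\rangle\,d\gamma_n$, and evaluating that integral reveals nothing.

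The missing argument is your first suggestion, carried out as in \Cref{lem:signed-gradient}. Since $T=\nabla\varphi$ with $\varphi$ convex, $D_jT_i=D_{ij}\varphi=D_iT_j$. In coordinates $x=s\varepsilon/\sqrt n+y$ with $y\in\varepsilon^\perp$, one has $D_jT_i=\frac{\varepsilon_j}{\sqrt n}\,D\bigl(g(\varepsilon_i\,\cdot)\bigr)\otimes\mathcal L_{\varepsilon^\perp}$. Taking $\varepsilon_i=1$ and $\varepsilon_j=-1$, the symmetry reads $-Dg=D\bigl(g(-\,\cdot)\bigr)$, i.e.\ $D\bigl(g(\cdot)+g(-\,\cdot)\bigr)=0$. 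So $g(s)+g(-s)$ is a.e.\ constant, and integrating against $\gamma$ shows the constant is $2m_\mu$. Hence $g(-W)=2m_\mu-g(W)$ almost surely. The left side has law $\mu$ and the right side has the reflection of $\mu$ about $m_\mu$, so $\mu$ is symmetric.

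You can also read this off from what you already derived. Equality gives $|DT_i|=\frac1{\sqrt n}\Delta\varphi$ for every $i$. On the other hand, $|DT_i|$ is $|Dg|\otimes\mathcal L_{\varepsilon^\perp}$ or its reflection under $s\mapsto -s$, according to the sign of $\varepsilon_i$. Equality of these two measures is exactly $D\bigl(g(\cdot)+g(-\,\cdot)\bigr)=0$.

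Two minor points. First, extracting $\sigma$ requires $\Delta\varphi\neq0$; the case $\Delta\varphi=0$ means $T$ is constant and $\mu$ is a Dirac mass, which is trivial. Second, $\sigma(x)$ is determined only up to a pointwise sign, so it is not ``locally constant''. What you actually use is $\sigma_1(x)\sigma(x)=\sqrt n\,a_1$, which suffices.
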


The proof has two main ingredients. Gaussian isoperimetry gives a
sharp lower bound on the Gaussian total variation of each component
of a Brenier map. An elementary matrix inequality then combines these
scalar estimates into a lower bound for the distributional Laplacian
of its convex potential.

We first introduce the notation used below. For
$\alpha,\beta\in\Pcal_2(\R^k)$, let $\Pi(\alpha,\beta)$ denote the set
of their couplings and set
\[
  \MC(\alpha,\beta)
  :=
  \sup_{\kappa\in\Pi(\alpha,\beta)}
  \int_{\R^k\times\R^k}\langle x,y\rangle\,d\kappa(x,y),
  \qquad
  \mtwo(\alpha)
  :=
  \int_{\R^k}|x|^2\,d\alpha(x).
\]
Then
\[
  \W_2^2(\alpha,\beta)
  =
  \mtwo(\alpha)+\mtwo(\beta)-2\MC(\alpha,\beta).
\]
Writing $\Phi$ for the distribution function of $\gamma$, define
\[
  F_\mu(t):=\mu((-\infty,t]),
  \qquad
  F_\mu^{-1}(s):=\inf\{y\in\R:F_\mu(y)\geq s\},
\]
and let
\begin{equation}\label{eq:qmu-general}
  \qmu(t):=F_\mu^{-1}(\Phi(t)),
  \qquad t\in\R.
\end{equation}
The map $\qmu$ is the monotone rearrangement from $\gamma$ to $\mu$.

The first ingredient is the following functional consequence of
Gaussian isoperimetry. It will later be applied to the components of
a Brenier map.

\begin{proposition}\label{prop:ineq_component}
Let $\mu \in \Pcal_2(\R)$ and $u\in \BVloc(\R^n)$ satisfy
$u_\#\gamma_n=\mu$. Then
\begin{equation}\label{eq:scalar-bound}
  \int_{\R^n} \rho_n \,  d|Du| \ge \MC(\gamma,\mu).
\end{equation}
Equality holds if and only if there is a
unit vector $a\in\R^n$ such that
\begin{equation}\label{eq:scalar-equality}
  u(x)=\qmu(\ip{a}{x})
  \qquad\text{for $\gamma_n$-almost every $x$.}
\end{equation}
\end{proposition}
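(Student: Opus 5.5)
Combine the Gaussian coarea formula (\Cref{prop:gaussian-coarea}) with Gaussian isoperimetry (\Cref{thm:gaussian-isoperimetry}) level by level, then identify the resulting one-dimensional integral with $\MC(\gamma,\mu)$.

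\textbf{Inequality.} Since $u_\#\gamma_n=\mu$, for each $t\in\R$ we have $\gamma_n(\{u>t\})=1-F_\mu(t)$. Hence \Cref{thm:gaussian-isoperimetry} gives
\[
  \Perg(\{u>t\})\ge I_\gamma(1-F_\mu(t))=\rho_1\bigl(\Phi^{-1}(F_\mu(t))\bigr).
\]
Integrating over $t$ and using \Cref{prop:gaussian-coarea} yields
\[
  \int\rho_n\,d|Du|\ge \int_\R \rho_1\bigl(\Phi^{-1}(F_\mu(t))\bigr)\,dt .
\]
It remains to show that the right-hand side equals $\MC(\gamma,\mu)$. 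By the one-dimensional monotone coupling, $\MC(\gamma,\mu)=\int_\R s\,\qmu(s)\,d\gamma(s)$. Write $\qmu(s)=\int_0^\infty \ind_{\{\qmu(s)>t\}}dt-\int_{-\infty}^0\ind_{\{\qmu(s)\le t\}}dt$ and apply Fubini, which is justified because $s\,\qmu(s)\in L^1(\gamma)$ by Cauchy--Schwarz. We use $\{s:\qmu(s)>t\}=\{s>\Phi^{-1}(F_\mu(t))\}$ up to a null set, together with $\int_r^\infty s\,\rho_1(s)\,ds=\rho_1(r)$ and $\int_{-\infty}^r s\,\rho_1(s)\,ds=-\rho_1(r)$. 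Both pieces then combine to $\int_\R\rho_1(\Phi^{-1}(F_\mu(t)))\,dt$, with the conventions $\rho_1(\pm\infty)=0$ covering the $t$ outside the support. This proves \eqref{eq:scalar-bound}.

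\textbf{Sufficiency of \eqref{eq:scalar-equality}.} If $u=\qmu(\ip a x)$, then each superlevel set $\{u>t\}$ equals $\gamma_n$-a.e.\ a half-space $H_{a,r(t)}$ with $r(t)=\Phi^{-1}(F_\mu(t))$. Indeed, $\qmu$ is nondecreasing, so $\{\qmu>t\}$ is an interval $(r,\infty)$ or $[r,\infty)$. Thus equality holds for every $t$. We must also check that $u\in\BVloc$, but this is part of the hypothesis when arguing the converse. For the ``if'' direction, $\qmu\circ\ip a\cdot$ is a monotone function of one variable and hence $\BVloc$.

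\textbf{Necessity (main obstacle).} Suppose equality holds. The two sides are finite, since $\MC(\gamma,\mu)<\infty$, and the integrands are ordered. Hence $\Perg(\{u>t\})=I_\gamma(\gamma_n(\{u>t\}))$ for Lebesgue-a.e.\ $t$. For such $t$ with $0<F_\mu(t)<1$, \Cref{thm:gaussian-isoperimetry} gives a unit vector $a(t)$ with $\{u>t\}=H_{a(t),r(t)}$ $\gamma_n$-a.e. Restricting to a countable dense set $D$ of such good $t$, nestedness $\{u>t\}\subseteq\{u>s\}$ for $s<t$ forces $a(s)=a(t)$ whenever $r(s),r(t)$ are finite. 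Indeed, if $a(s)\ne a(t)$, then $H_{a(t),r(t)}\setminus H_{a(s),r(s)}$ contains an open cone-like region of positive Gaussian measure. If $\mu$ is a Dirac mass, every $r(t)$ is $\pm\infty$ and $u$ is a.e.\ constant, so any $a$ works. Otherwise we get a common $a$. Finally, $u$ is recovered a.e.\ from its superlevel sets on $D$ via $u(x)=\sup\{t\in D: x\in\{u>t\}\}$. Hence $u(x)=\sup\{t\in D:\ip a x>\Phi^{-1}(F_\mu(t))\}$ a.e., and this equals $\qmu(\ip a x)$ for a.e.\ $x$, because $\ip a x$ has a continuous law and the two monotone functions agree off countably many points. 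The delicate points are the measure-zero bookkeeping for the exceptional $t$ and handling atoms and gaps of $\mu$, where $r(t)$ is constant on intervals. These are harmless because we only need agreement of $\qmu$ off a $\gamma$-null set.
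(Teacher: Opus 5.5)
Your proposal is correct and follows essentially the same route as the paper: coarea plus Gaussian isoperimetry level by level, a layer-cake/Fubini identification of $\int_\R I_\gamma(\mu((t,\infty)))\,dt$ with $\int s\,\qmu(s)\,d\gamma(s)$, and, for equality, a.e.\ half-space superlevel sets whose normals are forced to coincide by nestedness. The only difference is the final recovery step. You reconstruct $u$ pointwise as a supremum over a countable dense set of good levels, which correctly keeps the exceptional null sets countable; that supremum in fact equals $\qmu(\ip{a}{x})$ for every $x$, not just off countably many points. The paper instead concludes via $\int|u-v|\,d\gamma_n=\int_\R\gamma_n(\{u>t\}\mathbin{\triangle}\{v>t\})\,dt=0$.
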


\begin{proof}
Set $E_t:=\{u>t\}$. The pushforward constraint gives
\[
  \gamma_n(E_t)=\mu((t,\infty))
  \qquad\text{for every }t\in\R.
\]
Hence, by the Gaussian coarea formula and Gaussian isoperimetry,
\begin{align}\label{eq:scalar-profile}
  \int_{\R^n}\rho_n\,d|Du|
  =\int_\R \Perg(E_t)\,dt
  \geq
  \int_\R I_\gamma\bigl(\mu((t,\infty))\bigr)\,dt.
\end{align}
For every $t\in\R$, we have
\[
  \{\qmu>t\}
  =
  \bigl(\Phi^{-1}(F_\mu(t)),\infty\bigr).
\]
Since $\rho_1'(s)=-s\rho_1(s)$ and
$\mu((t,\infty))=1-F_\mu(t)$, it follows that
\begin{align*}
  \int_{\{\qmu>t\}}s\,d\gamma(s)
  =
  \rho_1\bigl(\Phi^{-1}(F_\mu(t))\bigr)
  =
  I_\gamma\bigl(\mu((t,\infty))\bigr).
\end{align*}
Integrating this identity in $t$, using
$\int_\R s\,d\gamma(s)=0$, and applying Fubini's theorem gives
\begin{align*}
  \int_\R I_\gamma\bigl(\mu((t,\infty))\bigr)\,dt
  &=
  \int_\R\int_\R
  s\bigl(
    \mathbf 1_{\{\qmu(s)>t\}}
    -\mathbf 1_{\{0>t\}}
  \bigr)\,dt\,d\gamma(s) \\
  &=
  \int_\R s\,\qmu(s)\,d\gamma(s).
\end{align*}
The last integral equals $\MC(\gamma,\mu)$ by one-dimensional
monotone rearrangement. This proves \eqref{eq:scalar-bound}. It is
straightforward to check that for every $a\in\mathbb S^{n-1}$, the
function $u(x)=\qmu(\langle a,x\rangle)$ is an equality case.

Suppose now that equality holds. With the notation from
\Cref{eq:gaussian-halfspaces}, set
\[
  r_t:=\Phi^{-1}(F_\mu(t))\in[-\infty,+\infty].
\]
Then, for every $a\in\mathbb S^{n-1}$ and $t \in \R$
\[
  \gamma_n(H_{a,r_t})
  =
  1-\Phi(r_t)
  =
  \mu((t,\infty))
  =
  \gamma_n(E_t).
\]
As $\Perg(E_t)
  \ge
  I_\gamma\bigl(\gamma_n(E_t)\bigr)$ for every $t \in \R$, equality in \eqref{eq:scalar-profile} yields that for almost every $t \in \R$
\[
\Perg(E_t)
  =
  I_\gamma\bigl(\gamma_n(E_t)\bigr).
\]
For such $t$, the equality cases in Gaussian isoperimetry yield
$a_t\in\mathbb S^{n-1}$ such that
\[
  \gamma_n\bigl(E_t\mathbin{\triangle}H_{a_t,r_t}\bigr)=0.
\]

Let $s<t$ be two such levels with $F_\mu(s),F_\mu(t) \in (0,1)$. Then $E_t\subseteq E_s$ and hence $
  \gamma_n\bigl(H_{a_t,r_t}\setminus H_{a_s,r_s}\bigr)=0.
$ As two nontrivial half-spaces that are nested up to a Gaussian null set
have the same oriented normal, it follows that all the vectors $a_t$
coincide with some $a\in\mathbb S^{n-1}$. (If there is no $t$ such that $r_t$ is finite, we choose $a$ arbitrary). Therefore, we have for almost all $t$,
\[
\gamma_n\bigl(E_t\mathbin{\triangle}H_{a,r_t}\bigr)=0.
\]
Set
\[
  v(x):=\qmu(\langle a,x\rangle).
\]
By the definition of the quantile, we have
$  \{v>t\}
  =
  H_{a,r_t}
$. Hence, for almost every $t\in\R$
\[
  \gamma_n\bigl(
    \{u>t\}\mathbin{\triangle}\{v>t\}
  \bigr)=0.
\]
As $
  |r-s|
  =
  \int_\R
  \left|
    \mathbf 1_{\{r>t\}}
    -
    \mathbf 1_{\{s>t\}}
  \right|\,dt
$, we find using Fubini's theorem
\begin{align*}
  \int_{\R^n}|u-v|\,d\gamma_n
  &=
  \int_\R
  \gamma_n\bigl(
    \{u>t\}\mathbin{\triangle}\{v>t\}
  \bigr)\,dt
  =0.
\end{align*}
Thus $u(x)=\qmu(\langle a,x\rangle)$ for $\gamma_n$-almost every $x$.
\end{proof}

\begin{remark}
\Cref{prop:ineq_component} can be seen as a functional version of the Gaussian isoperimetric inequality. In particular, taking $u=\ind_E$ recovers the
Gaussian isoperimetric inequality. Indeed, writing
$p:=\gamma_n(E)$ we have 
$
  (\ind_E)_\#\gamma_n
  =
  (1-p)\delta_0+p\delta_1
  =:\mu_E
$
and one-dimensional monotone rearrangement gives
\[
  \MC(\gamma,\mu_E)
  =
  \int_{\Phi^{-1}(1-p)}^\infty s\,d\gamma(s)
  =
  \rho_1\bigl(\Phi^{-1}(1-p)\bigr)
  =
  I_\gamma(p).
\]
Thus \eqref{eq:scalar-bound} becomes
$
  \Perg(E)\geq I_\gamma(\gamma_n(E))
$
and \eqref{eq:scalar-equality} states that the equality cases are precisely the indicator functions of half-spaces.
\end{remark}

\begin{lemma}\label{lem:matrix}
Let $A\in\mathbb R^{n\times n}$ be symmetric and positive semidefinite
and write $e_i$ for the $i$-th unit vector. Then
\[
  \frac{1}{\sqrt n}\sum_{i=1}^n |Ae_i|
  \leq
  \operatorname{tr}(A).
\]
Equality holds if and only if
\[
  A=c\,\varepsilon\varepsilon^\top
\]
for some $c\geq0$ and some $\varepsilon\in\{-1,1\}^n$.
\end{lemma}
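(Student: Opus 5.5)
The plan is to go through the intermediate quantity $\|A\|_{\mathrm{HS}}$, as in \eqref{eq:formal-matrix}, proving
\[
  \frac{1}{\sqrt n}\sum_{i=1}^n |Ae_i|
  \;\le\;
  \|A\|_{\mathrm{HS}}
  \;\le\;
  \operatorname{tr}(A),
\]
and then reading off the equality cases of each inequality separately.

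For the first inequality, apply Cauchy--Schwarz in $\R^n$ to the vector $(|Ae_1|,\dots,|Ae_n|)$ and the all-ones vector:
\[
  \sum_{i=1}^n|Ae_i|
  \le
  \sqrt n\Bigl(\sum_{i=1}^n|Ae_i|^2\Bigr)^{1/2}
  =
  \sqrt n\,\|A\|_{\mathrm{HS}} .
\]
Equality holds if and only if all column norms $|Ae_i|$ are equal.

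For the second inequality, diagonalize $A$ with eigenvalues $\lambda_1,\dots,\lambda_n\ge0$. Then $\|A\|_{\mathrm{HS}}=\bigl(\sum\lambda_j^2\bigr)^{1/2}$ and $\operatorname{tr}(A)=\sum\lambda_j$. Since all $\lambda_j$ are nonnegative,
\[
  \Bigl(\sum_j\lambda_j\Bigr)^2=\sum_j\lambda_j^2+\sum_{j\ne k}\lambda_j\lambda_k\ge\sum_j\lambda_j^2,
\]
with equality if and only if all cross products $\lambda_j\lambda_k$ with $j\ne k$ vanish, that is, at most one eigenvalue is nonzero.

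For the equality case, suppose equality holds in the combined inequality, so both steps are tight. From the second step, $A$ has rank at most one and is positive semidefinite. Hence $A=vv^\top$ for some $v\in\R^n$, possibly $v=0$. Then $Ae_i=v_iv$, so $|Ae_i|=|v_i|\,|v|$. If $v=0$, then $A=0=0\cdot\varepsilon\varepsilon^\top$ for any $\varepsilon$. Otherwise, equality of the column norms forces $|v_i|=s$ for all $i$, with $s>0$. Writing $v=s\varepsilon$ with $\varepsilon\in\{-1,1\}^n$ gives $A=s^2\varepsilon\varepsilon^\top$.

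For the converse, take $A=c\,\varepsilon\varepsilon^\top$ with $c\ge0$. Then $|Ae_i|=c\sqrt n$, so the left-hand side is $\frac{1}{\sqrt n}\cdot n\cdot c\sqrt n=cn$. On the other side, $\operatorname{tr}(A)=c|\varepsilon|^2=cn$, so equality holds.

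I do not expect a real obstacle here. The only points needing care are to use the nonnegativity of the eigenvalues in the $\ell^1$ versus $\ell^2$ comparison, and to include the degenerate case $c=0$ in the equality statement.
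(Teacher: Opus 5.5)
Your proposal is correct and follows the paper's proof essentially step for step. Both go through $\|A\|_{\mathrm{HS}}$ via Cauchy--Schwarz and the $\ell^2\le\ell^1$ comparison of the nonnegative eigenvalues, then read off rank at most one and equal column norms to get $A=c\,\varepsilon\varepsilon^\top$; your parametrization $A=vv^\top$ in place of $\lambda vv^\top$ with $|v|=1$ is only a cosmetic difference.
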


\begin{proof}
The Hilbert--Schmidt norm satisfies
\[
  \|A\|_{\mathrm{HS}}^2
  =
  \sum_{i=1}^n|Au_i|^2
\]
for every orthonormal basis $(u_1,\ldots,u_n)$. Applying this identity
first to the standard basis and then to an orthonormal eigenbasis of
$A$, we obtain
\[
  \sum_{i=1}^n|Ae_i|^2
  =
  \sum_{i=1}^n\lambda_i^2,
\]
where $\lambda_1,\ldots,\lambda_n$ are the eigenvalues of $A$.
Cauchy--Schwarz and the nonnegativity of the eigenvalues give
\[
  \frac1{\sqrt n}\sum_{i=1}^n|Ae_i|
  \leq
  \left(\sum_{i=1}^n|Ae_i|^2\right)^{1/2}
  =
  \left(\sum_{i=1}^n\lambda_i^2\right)^{1/2}
  \leq
  \sum_{i=1}^n\lambda_i
  =
  \operatorname{tr}(A).
\]

Suppose that equality holds. Equality in the second inequality forces
$A$ to have rank at most one. If $A\neq0$, we may therefore write
$A=\lambda vv^\top$ for some $\lambda>0$ and some unit vector $v$.
Equality in the Cauchy--Schwarz inequality gives
$|Ae_i|=|Ae_j|$ for all $i,j$, and hence
$|v_i|=1/\sqrt n$ for every $i$. Thus
$v=\varepsilon/\sqrt n$ for some
$\varepsilon\in\{-1,1\}^n$, and
\[
  A=\frac{\lambda}{n}\varepsilon\varepsilon^\top.
\]
The case $A=0$ corresponds to $c=0$. Conversely, a direct computation
shows that every matrix $A=c\,\varepsilon\varepsilon^\top$ satisfies
the inequality with equality.
\end{proof}

The following lemma will be crucial in the classification of equality cases in the proof of the main result.

\begin{lemma}\label{lem:signed-gradient}
Let $\mu\in\Pcal_2(\R)$ and let
$\varepsilon\in\{-1,1\}^n$. Define
$T_\varepsilon\colon\R^n\to\R^n$ by
\[
  T_{\varepsilon,i}(x)
  :=
  \qmu\left(
    \frac{\varepsilon_i}{\sqrt n}
    \langle\varepsilon,x\rangle
  \right),
  \qquad i\in\{1,\ldots,n\},
\]
and define 
\[
  \varphi_\varepsilon(x)
  :=
  m_\mu\sum_{i=1}^n x_i
  +
  \sqrt n\,h_\mu\left(
    \frac{\langle\varepsilon,x\rangle}{\sqrt n}
  \right),
  \qquad
  h_\mu(s)
  :=
  \int_0^s\bigl(\qmu(r)-m_\mu\bigr)\,dr.
\]
Then $\varphi_\varepsilon$ is convex and the following statements hold:
\begin{itemize}
    \item If $\varepsilon=(1,\ldots,1)$ or $\varepsilon=(-1,\ldots,-1)$, then $T_\varepsilon=\nabla\varphi_{(1,\ldots,1)}$.
    \item If $\varepsilon$ contains both signs, then $T_\varepsilon$ is the
gradient of a convex function if and only if $\mu$ is symmetric about
$m_\mu$. In this case, $T_\varepsilon=\nabla\varphi_\varepsilon$.
\end{itemize}
\end{lemma}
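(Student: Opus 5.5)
We will first verify convexity of $\varphi_\varepsilon$, then compute its gradient, and then treat the two cases separately.

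\emph{Convexity and gradient.} Since $\qmu$ is nondecreasing, $\qmu-m_\mu$ is nondecreasing and locally integrable. Indeed, $\qmu$ is finite on $\R$ because $\Phi(t)\in(0,1)$, and it is monotone, hence locally bounded. Therefore $h_\mu$ is convex, with right derivative $\qmu-m_\mu$ everywhere and derivative $\qmu-m_\mu$ at every continuity point of $\qmu$, i.e.\ off a countable set. The function $x\mapsto\sqrt n\,h_\mu(\langle\varepsilon,x\rangle/\sqrt n)$ is a convex function composed with a linear map, hence convex. Adding the linear term $m_\mu\sum_i x_i$ preserves convexity. By the chain rule, at every $x$ with $\langle\varepsilon,x\rangle/\sqrt n$ a continuity point of $\qmu$ (a set of full $\mathcal L^n$-measure, since its complement is a countable union of hyperplanes), we get
\[
  \partial_i\varphi_\varepsilon(x)=m_\mu+\varepsilon_i\Bigl(\qmu\bigl(\tfrac{\langle\varepsilon,x\rangle}{\sqrt n}\bigr)-m_\mu\Bigr).
\]
Gradients of convex functions are defined only a.e., so equalities $T=\nabla\varphi$ are understood $\mathcal L^n$-a.e.

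\emph{Constant sign.} For $\varepsilon=\mathbf 1:=(1,\ldots,1)$ the formula gives $\partial_i\varphi_{\mathbf 1}(x)=\qmu(\langle\mathbf 1,x\rangle/\sqrt n)=T_{\mathbf 1,i}(x)$. For $\varepsilon=-\mathbf 1$ we have $\varepsilon_i\langle\varepsilon,x\rangle=\langle\mathbf 1,x\rangle$ for all $i$, so $T_{-\mathbf 1}=T_{\mathbf 1}=\nabla\varphi_{\mathbf 1}$.

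\emph{Mixed signs, symmetric case.} The key identity is that symmetry of $\mu$ about $m_\mu$ is equivalent to the oddness relation
\[
  \qmu(-s)-m_\mu=-(\qmu(s)-m_\mu)
\]
holding at all continuity points $s$ (equivalently, for a.e.\ $s$). This follows because $-\mathrm{id}$ pushes $\gamma$ to itself, so $s\mapsto 2m_\mu-\qmu(-s)$ is a nondecreasing map pushing $\gamma$ to the reflection of $\mu$. By uniqueness of monotone rearrangements (a.e., hence at continuity points), it equals $\qmu$ if and only if $\mu$ is symmetric about $m_\mu$. If $\mu$ is symmetric, then for $\varepsilon_i=-1$ we obtain
\[
  T_{\varepsilon,i}(x)=\qmu(-s)=m_\mu-(\qmu(s)-m_\mu)=\partial_i\varphi_\varepsilon(x),\qquad s=\langle\varepsilon,x\rangle/\sqrt n,
\]
and for $\varepsilon_i=+1$ the identity is immediate. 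Hence $T_\varepsilon=\nabla\varphi_\varepsilon$ a.e.

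\emph{Mixed signs, converse.} Suppose $T_\varepsilon=\nabla\psi$ a.e.\ for some convex $\psi$. I expect this step to be the main obstacle, because $\qmu$ may be discontinuous and $\nabla\psi$ is defined only a.e., so the argument must be made distributionally. Pick $i,j$ with $\varepsilon_i=1$ and $\varepsilon_j=-1$. Symmetry of the distributional Hessian, $D_j(\partial_i\psi)=D_i(\partial_j\psi)$, gives
\[
  D_j\bigl[\qmu(s(x))\bigr]=D_i\bigl[\qmu(-s(x))\bigr],\qquad s(x)=\langle\varepsilon,x\rangle/\sqrt n.
\]
For a function $g(s(x))$ with $g$ monotone, the chain rule for BV functions of one variable composed with a linear map gives $D_k[g(s(x))]=(\varepsilon_k/\sqrt n)\,(Dg)(s(x))$, in the sense of pulling back the one-dimensional measure $Dg$ along $s$. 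Writing $g_1=\qmu$ and $g_2(s)=\qmu(-s)$, the identity becomes $-Dg_1=Dg_2$ as measures on $\R$ after disintegrating along the direction $\varepsilon$. Thus $\qmu(s)+\qmu(-s)$ has zero distributional derivative and is a.e.\ constant, equal to some $c$. Hence $\qmu(-\cdot)=c-\qmu$ a.e., so the map $s\mapsto c-\qmu(-s)$, which agrees a.e.\ with $\qmu$, pushes $\gamma$ to $\mu$, and consequently $\mu$ is symmetric about $c/2$. Taking means gives $c/2=m_\mu$, so $\mu$ is symmetric about $m_\mu$. To keep the computation clean, I would test against product test functions in coordinates adapted to $\varepsilon$, rotating so that $\varepsilon/\sqrt n$ is a basis vector; this reduces the Hessian-symmetry identity to a one-dimensional distributional identity.
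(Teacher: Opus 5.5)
Your proposal is correct and follows the paper's proof essentially step by step. You get convexity from the monotonicity of $\qmu$ and use the a.e.\ gradient formula; for mixed signs, you apply symmetry of the distributional Hessian to a pair $i,j$ with $\varepsilon_i=1$, $\varepsilon_j=-1$ to obtain $D\bigl(\qmu(\cdot)+\qmu(-\cdot)\bigr)=0$, and then identify the constant as $2m_\mu$. Your additional argument via uniqueness of monotone rearrangements, showing that symmetry of $\mu$ about $m_\mu$ is equivalent to $\qmu(-s)=2m_\mu-\qmu(s)$ a.e., fills in the symmetric-case implication that the paper only asserts.
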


\begin{proof}
Since $\qmu$ is nondecreasing, $h_\mu$ and hence
$\varphi_\varepsilon$ are convex. Moreover, $\varphi_\varepsilon$ is $\mathcal L^n$-almost everywhere differentiable, and
\begin{equation}\label{eq:gradient-signed-potential}
  \partial_i\varphi_\varepsilon(x)
  =
  m_\mu
  +
  \varepsilon_i\left(
    \qmu\left(\frac{\langle\varepsilon,x\rangle}{\sqrt n}\right)
    -m_\mu
  \right).
\end{equation}
If $\varepsilon=(1,\ldots,1)$, this equals
$T_{\varepsilon,i}(x)$. Since
$T_{-\varepsilon}=T_\varepsilon$, this also proves the first claim
when $\varepsilon=(-1,\ldots,-1)$.

Suppose now that $\varepsilon$ contains both signs and that
$T_\varepsilon=\nabla\varphi$ for some convex function $\varphi$. We
identify $\R\times\varepsilon^\perp$ with $\R^n$ through
$(s,y)\mapsto s\varepsilon/\sqrt n+y$ and write
$\mathcal L_{\varepsilon^\perp}$ for Lebesgue measure on
$\varepsilon^\perp$. Then
\[
  DT_{\varepsilon,i}
  =
  \frac{\varepsilon}{\sqrt n}
  \left(
    D\bigl(q_\mu(\varepsilon_i\,\cdot)\bigr)
    \otimes\mathcal L_{\varepsilon^\perp}
  \right),
\]
The symmetry of $D^2\varphi=DT_\varepsilon$ therefore yields, for all $i,j$,
\[
  \varepsilon_j
  D\bigl(q_\mu(\varepsilon_i\,\cdot)\bigr)
  =
  \varepsilon_i
  D\bigl(q_\mu(\varepsilon_j\,\cdot)\bigr)
\]
as Radon measures on $\R$.
Choosing $i,j$ such that $\varepsilon_i=1$ and
$\varepsilon_j=-1$, we obtain
\[
  D\bigl(q_\mu(\,\cdot\,)+q_\mu(-\,\cdot\,)\bigr)=0.
\]
Thus $s\mapsto q_\mu(s)+q_\mu(-s)$ is constant almost everywhere.
Integration with respect to $\gamma$ shows that this constant is
$2m_\mu$, and hence
\begin{equation}\label{prf:qsym}
  q_\mu(-s)=2m_\mu-q_\mu(s)
  \qquad\text{for almost every }s\in\R.
\end{equation}
Therefore, $\mu$ is symmetric about $m_\mu$.

Conversely, if $\mu$ is symmetric about $m_\mu$, then \eqref{prf:qsym} holds. Consequently, the right-hand side of
\eqref{eq:gradient-signed-potential} equals
$T_{\varepsilon,i}(x)$ for every $i$, hence 
$T_\varepsilon=\nabla\varphi_\varepsilon$.
\end{proof}

We are now ready to prove the main result.

\begin{proof}[Proof of \Cref{thm:general}]
\emph{Upper bound.}
Fix $\pi\in\Pi_n(\mu)$ and let $T=\nabla\varphi$, where $\varphi \in L^1(\gamma_n)$ is convex, be the Brenier map
from $\gamma_n$ to $\pi$. Note that 
\[
  \mtwo(\gamma_n)=n,
  \qquad
  \mtwo(\pi)=n\,\mtwo(\mu),
\]
and $\nabla\varphi\in L^2(\gamma_n;\R^n)$ because $\| \nabla\varphi\|^2_{L_2(\gamma_n)} =   \mtwo(\pi) < \infty$. Thus Gaussian
integration by parts, \Cref{lem:GaussPartInt}, gives
\begin{align}\label{eq:wasserstein-laplacian}
  \W_2^2(\gamma_n,\pi)
  &=
  n+n\,\mtwo(\mu)
  -2\MC(\gamma_n,\pi) \notag\\
  &=
  n+n\,\mtwo(\mu)
  -2\int_{\R^n} \langle \nabla\varphi(x),x \rangle \, d\gamma_n(x) \notag \\
  &=
  n+n\,\mtwo(\mu)
  -2\int_{\R^n}\rho_n\,d(\Delta\varphi).
\end{align}
As a symmetric positive semidefinite matrix is zero if its trace is zero, $D^2\varphi$ is absolutely continuous w.r.t.\ $\Delta\varphi$. By the Radon--Nikodým theorem, there is a Borel function $A : \R^n \to \R^{n \times n}$ such that
\[
D^2 \varphi = A \Delta \varphi, \qquad \tr(A(x))=1, \qquad A(x) \text{ symmetric positive semidefinite}.
\]
For $T_i=\partial_i\varphi$, we have $T_i\in \BVloc(\R^n)$
and
\[
  D T_i=D^2\varphi\,e_i=(Ae_i)\,\Delta\varphi,
  \qquad
  d|D T_i|=|Ae_i|\,d(\Delta\varphi).
\]
Applying \Cref{lem:matrix} pointwise to $A$ and then
\Cref{prop:ineq_component} to each $T_i$ yields
\begin{align}\label{eq:general-laplacian-bound}
  \int_{\R^n}\rho_n\,d(\Delta\varphi)
  \geq
  \frac1{\sqrt n}\sum_{i=1}^n
  \int_{\R^n}\rho_n\,d|D T_i| 
  \geq
  \frac1{\sqrt n}\sum_{i=1}^n\MC(\gamma,\mu)
  =
  \sqrt n\,\MC(\gamma,\mu).
\end{align}
Combining \eqref{eq:wasserstein-laplacian} and
\eqref{eq:general-laplacian-bound}, we obtain
\[
  \W_2^2(\gamma_n,\pi)
  \leq
  n+n\,\mtwo(\mu)-2\sqrt n\,\MC(\gamma,\mu).
\]
Using that 
\[
  \W_2^2\bigl(\gamma,(\sqrt n\,\mathrm{id})_\#\mu\bigr)
  =
  1+n\,\mtwo(\mu)-2\sqrt n\,\MC(\gamma,\mu),
\]
we conclude 
\[
\W_2^2(\gamma_n,\pi) 
  \leq \W_2^2\bigl(\gamma,(\sqrt n\,\mathrm{id})_\#\mu\bigr) + n-1.
\]

\medskip

\noindent\emph{Construction of maximizers.}
Take $\varepsilon=(1,\ldots,1)$; if $\mu$ is symmetric about
$m_\mu$, let more generally $\varepsilon\in\{-1,1\}^n$. By
\Cref{lem:signed-gradient}, $T_\varepsilon$ is the gradient of a
convex function and is therefore the optimal transport from
$\gamma_n$ to
\[
  \pi_\varepsilon:=(T_\varepsilon)_\#\gamma_n.
\]
Each component of $T_\varepsilon$ has law $\mu$, so
$\pi_\varepsilon\in\Pi_n(\mu)$. If
$\varepsilon=(1,\ldots,1)$, then $\pi_\varepsilon=\pi_{\Delta,\mu}$.
If $\mu$ is symmetric about $m_\mu$, then
\[
  \qmu(\varepsilon_i s)
  =
  m_\mu+\varepsilon_i\bigl(\qmu(s)-m_\mu\bigr)
\]
for almost every $s$, and hence
$\pi_\varepsilon=\pi_{\varepsilon,\mu}$. Moreover,
\begin{align*}
  \int_{\R^n}
  \langle x,T_\varepsilon(x)\rangle\,d\gamma_n(x)
  &=
  \frac1{\sqrt n}\sum_{i=1}^n
  \int_\R \varepsilon_i s\,\qmu(\varepsilon_i s)\,d\gamma(s)\\
  &=
  \sqrt n\int_\R s\,\qmu(s)\,d\gamma(s)
  =
  \sqrt n\,\MC(\gamma,\mu).
\end{align*}
Thus $\MC(\gamma_n,\pi_\varepsilon)
=\sqrt n\,\MC(\gamma,\mu)$, and
\[
  \W_2^2(\gamma_n,\pi_\varepsilon)
  =
  \W_2^2\bigl(\gamma,(\sqrt n\,\mathrm{id})_\#\mu\bigr)+n-1.
\]

\medskip
\noindent\emph{Characterization of equality cases.}
Let $\pi\in\Pi_n(\mu)$ be a maximizer and let $T=\nabla\varphi$
be the optimal map from $\gamma_n$ to $\pi$. If
$\Delta\varphi=0$, then $T$ is constant, so $\mu$ is a Dirac mass
and the conclusion is immediate. We may therefore assume that
$\Delta\varphi\neq0$.

Equality in \eqref{eq:general-laplacian-bound} implies that each
$T_i$ is an equality case in \Cref{prop:ineq_component}. Hence, for
every $i\in\{1,\ldots,n\}$, there is a unit vector $a_i\in\R^n$
such that
\begin{equation}\label{eq:equality-components}
  T_i(x)=\qmu(\langle a_i,x\rangle)
  \qquad\text{for $\gamma_n$-almost every }x.
\end{equation}

Write $B$ for the Radon--Nikodým derivative of $D^2\varphi$ with
respect to $\Delta\varphi$, so that
\[
  D^2\varphi=B\,\Delta\varphi,
  \qquad
  \operatorname{tr}(B)=1,
  \qquad
  B\text{ is symmetric and positive semidefinite}
\]
for $\Delta\varphi$-almost every $x$. It is straightforward to check that $DT_i = a_i ( D\qmu \otimes \mathcal{L}_{a_i^\bot})$, where $\operatorname{span}(a_i)\times a_i^\bot$ is identified with $\R^n$ through $(s a_i,y)\mapsto s a_i+y$ and $\mathcal{L}_{a_i^\bot}$ is Lebesgue measure on $a_i^\bot$. Since $\qmu$ is nondecreasing, $D \qmu \ge 0$ and hence
\[
  DT_i=a_i\,|DT_i|.
\]
On the other hand,
\[
  DT_i=D^2\varphi\,e_i=(Be_i)\,\Delta\varphi,
  \qquad
  |DT_i|=|Be_i|\,\Delta\varphi,
\]
and therefore
\begin{equation}\label{eq:prf:B_struc}
  Be_i=a_i|Be_i|
  \qquad\text{for $\Delta\varphi$-almost every }x.
\end{equation}

Equality in the first inequality of
\eqref{eq:general-laplacian-bound}, together with $\rho_n>0$, yields
\[
  \frac1{\sqrt n}\sum_{i=1}^n|B(x)e_i|=1
\]
for $\Delta\varphi$-almost every $x$. By the equality cases in
\Cref{lem:matrix}, for such $x$ there is
$\sigma(x)\in\{-1,1\}^n$ such that
\[
  B(x)=\frac1n\sigma(x)\sigma(x)^\top.
\]
In particular,
\[
  |B(x)e_i|=\frac1{\sqrt n},
  \qquad
  a_i=\frac{\sigma_i(x)}{\sqrt n}\sigma(x),
\]
where the second identity follows from \eqref{eq:prf:B_struc}.
Taking $i=1$, we see that
\[
  \varepsilon:=\sqrt n\,a_1=\sigma_1(x)\sigma(x)
\]
is a fixed vector in $\{-1,1\}^n$. Consequently, for every $i\in\{1,\ldots,n\}$, 
\[
  a_i=\frac{\varepsilon_i}{\sqrt n}\varepsilon.
\]
It follows from \eqref{eq:equality-components} that
\[
  T_i(x)
  =
  \qmu\left(
    \frac{\varepsilon_i}{\sqrt n}
    \langle\varepsilon,x\rangle
  \right)
  =
  T_{\varepsilon,i}(x).
\]

We may now apply \Cref{lem:signed-gradient}. If all entries of
$\varepsilon$ have the same sign, then $\pi$ is the diagonal
coupling. If $\varepsilon$ contains both signs, the fact that
$T_\varepsilon$ is the gradient of a convex function forces $\mu$ to
be symmetric about $m_\mu$, and then
$\pi=\pi_{\varepsilon,\mu}$. Together with the construction above,
this proves the asserted classification of all maximizers.
\end{proof}

\begin{proof}[Proof of \Cref{thm:gaussian-ot}]
The claim  follows from \Cref{thm:general} with $\mu=\gamma$ as $\gamma$ is symmetric and  $\W_2(\gamma, (\sqrt{n}\,\textup{id})_\# \gamma) = \sqrt{n}-1$. 
\end{proof}

\section*{Acknowledgments}
The author thanks Hugo Lavenant for bringing the uniform-marginal
problem to his attention and Leo Brauner for helpful discussions. The
author used a large language model for assistance with exposition and
technical checks.
The proof strategy is due to the author, who independently verified all
mathematical arguments. This research was funded in whole or in part by the Austrian Science
Fund (FWF) [10.55776/J4981].
For open access purposes, the author has applied a CC BY public copyright license to any author accepted manuscript version arising from this submission.

\bibliographystyle{plain}
\bibliography{joint_biblio}

\end{document}